\documentclass[12pt, reqno]{amsart} 
\usepackage[T1]{fontenc}
\usepackage{lmodern}
\usepackage{amsmath,amssymb,amsthm,mathtools,mathrsfs}
\usepackage[a4paper,margin=1in]{geometry}
\usepackage{microtype}
\usepackage{enumitem}
\usepackage[hidelinks]{hyperref}

\numberwithin{equation}{section}

\newtheorem{theorem}{Theorem}[section]

\newtheorem{proposition}[theorem]{Proposition}
\newtheorem{corollary}[theorem]{Corollary}
\theoremstyle{definition}

\newtheorem{conjecture}[theorem]{Conjecture}
\theoremstyle{remark}
\newtheorem{remark}[theorem]{Remark}

\DeclareMathOperator{\Ric}{Ric}

\DeclareMathOperator{\Vol}{Vol}
\DeclareMathOperator{\inj}{inj}

\DeclareMathOperator{\tr}{tr}

\newcommand{\R}{\mathbb{R}}

\newcommand{\CP}{\mathbb{CP}}
\newcommand{\Id}{\mathrm{Id}}
\newcommand{\dd}{\,\mathrm{d}}
\newcommand{\eps}{\varepsilon}

\newcommand{\cR}{\mathcal{R}}

\title{On Positively Curved Einstein Four-Manifolds with Euler Characteristic at Most Three}
\author{Liang Cheng}
\date{}
\subjclass[2020]{Primary 53E99; Secondary 53C20}

\thanks{Liang Cheng's research is partially supported by the National Natural Science Foundation of China (Grant No. 12671066).}

\address{School of Mathematics and Statistics \& Hubei Key Laboratory of Mathematical Sciences, Central China Normal University, Wuhan, 430079, P.R.China}

\email{chengliang@ccnu.edu.cn}

\begin{document}
	\maketitle
	
	\begin{abstract}
		It is conjectured that a complete, orientable Einstein four-manifold with
		strictly positive sectional curvature must be isometric to either the round
		four-sphere $S^4$ or the complex projective plane $\mathbb{CP}^2$ with the
		Fubini--Study metric. In this paper, we confirm this conjecture when the
		manifold is homeomorphic to $S^4$ or $\mathbb{CP}^2$.
		More precisely, we show that a complete, orientable Einstein four-manifold
		$M$ with strictly positive sectional curvature and Euler characteristic
		$\chi(M) \le 3$ (equivalently, by Freedman's classification theorem,
		homeomorphic to $S^4$ or $\mathbb{CP}^2$) is isometric to either the round
		$S^4$ or the Fubini--Study $\mathbb{CP}^2$. As an application, we obtain
		that a complete, orientable Einstein four-manifold with strictly positive
		sectional curvature admitting a nontrivial Killing field is isometric to
		either the round $S^4$ or the Fubini--Study $\mathbb{CP}^2$.
		\bigskip
		
		{ \textbf{Keywords}:  Einstein 4-manifolds, Positive sectional curvature, Euler characteristic}	
	\end{abstract}

	\section{Introduction}
	
   	A Riemannian manifold $(M,g)$ is called Einstein if its Ricci curvature
	satisfies $\mathrm{Ric}_g = \lambda g$ for some constant $\lambda$. A
	fundamental problem in Riemannian geometry is to determine which smooth
	manifolds admit Einstein metrics. In dimension four, the Hitchin--Thorpe
	inequality \cite{Thorpe1969, Hitchin1974} gives a topological obstruction
	to the existence of such metrics, while no topological obstruction is
	known in higher dimensions. It is therefore of great interest to classify
	Einstein four-manifolds under natural geometric assumptions.

 In dimension four, only a few compact
	examples for Einstein manifolds with positive sectional curvature  are known. The only known ones are  the round
	sphere $S^4$, the complex projective plane $\CP^2$ and their
	quotients.  This
	motivates the following folklore conjecture.
	
	\begin{conjecture}\label{conj:main}
		A complete, orientable Einstein four-manifold with strictly positive
		sectional curvature is isometric to either the round
		sphere $S^4$ or $\CP^2$ with the Fubini--Study metric.
	\end{conjecture}

Although Conjecture~\ref{conj:main} remains open, a number of partial
results have been obtained under additional curvature hypotheses.
 Throughout this paper we normalize
the metric so that $\Ric_g=3g$; in this normalization,
$K\ge\varepsilon>0$ implies $K\le3-2\varepsilon$.
Berger~\cite{Berger1961} obtained a classification under
$1/4$-pinching, which applies in particular when $K\ge1/2$.
The sufficient lower bound was subsequently reduced to
$(\sqrt{1249}-23)/40$ by Yang~\cite{Yang}, to
$1-\sqrt{2}/2$ by Costa~\cite{Costa2004}, and to $1/4$ by
Cao and Wu~\cite{CaoWu2014}; see also~\cite{Wu2019}.
For upper curvature bounds, Costa~\cite{Costa2004} proved a
classification under $K\le2$, and Cao and Tran~\cite{CaoTran2018}
relaxed this condition to $K\le(14-\sqrt{19})/4$.
Further improvements of the upper curvature bound were obtained by
Cao and Tran~\cite{CaoTran2022} and Cui and Sun~\cite{CuiSun2021},
who also established refined pinching conditions involving the first
positive eigenvalue of the Laplacian.

Related rigidity results also have been obtained under other curvature
conditions, including nonnegative sectional curvature together with
a nontrivial positive-definite intersection form~\cite{GurskyLeBrun},
positivity of the curvature operator~\cite{Tachibana1974},
nonnegative isotropic curvature~\cite{MicallefWang1993,Brendle2010},
and strict pointwise $1/4$-pinching~\cite{BrendleSchoen2009}.

Instead of imposing extra curvature conditions, we confirm
Conjecture~\ref{conj:main} under the topological condition that the Euler
characteristic satisfies $\chi(M)\le 3$.

	\begin{theorem}\label{thm:main}
		Let $(M^4,g)$ be a complete, orientable Einstein manifold with strictly
		positive sectional curvature.
		If the Euler characteristic of $M$ satisfies $$\chi(M)\le 3,$$ then
		$(M^4,g)$ is isometric either to the unit round sphere $S^4$ or to $\CP^2$
		with the Fubini--Study metric.
	\end{theorem}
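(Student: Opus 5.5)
We would combine the Gauss--Bonnet--Chern and signature formulas with a pointwise curvature inequality. Normalize $\Ric_g=3g$, so the scalar curvature is $12$. By Myers' theorem $M$ is compact. By Synge's theorem (even dimension, orientable, $K>0$) $M$ is simply connected, so $b_1=b_3=0$ and $\chi(M)=2+b_2\le 3$ forces $b_2\in\{0,1\}$. Split the Weyl tensor as $W=W^+ + W^-$. For an Einstein metric the Chern--Gauss--Bonnet and Hirzebruch formulas read
\begin{equation*}
8\pi^2\chi(M)=\int_M\Bigl(|W^+|^2+|W^-|^2+\tfrac{R^2}{24}\Bigr)\dd V,\qquad 12\pi^2\tau(M)=\int_M\bigl(|W^+|^2-|W^-|^2\bigr)\dd V .
\end{equation*}
Reversing orientation if necessary, we may assume $\tau\ge0$. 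Then $\tau=b_2$ when $b_2\le 1$, since $b_2^-=0$ in both cases (for $b_2=1$ the intersection form is definite, and we choose the sign of the orientation so that it is positive). Combining the two formulas gives
\begin{equation*}
\int_M|W^-|^2\dd V=4\pi^2\bigl(2\chi-3\tau\bigr)-\tfrac{1}{2}\int_M\tfrac{R^2}{24}\dd V=4\pi^2\bigl(2\chi-3\tau\bigr)-3\Vol(M).
\end{equation*}
For $S^4$ the topological term is $16\pi^2$, and for $\CP^2$ it is $12\pi^2$.

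The goal is to show $W^-\equiv0$. Then $(M,g)$ is half-conformally flat Einstein with positive scalar curvature, and Hitchin's classification gives $S^4$ or $\CP^2$ with the Fubini--Study metric. By the displayed identity, $W^-\equiv 0$ is equivalent to the volume identity $\Vol(M)=\tfrac{4\pi^2}{3}(2\chi-3\tau)$, that is, $\Vol(M)=\tfrac{8\pi^2}{3}$ in the $S^4$ case and $\Vol(M)=\pi^2$ in the $\CP^2$ case. These are exactly the volumes of the model spaces. Bishop--Gromov gives the upper bound $\Vol(M)\le\Vol(S^4)=\tfrac{8\pi^2}{3}$, which already yields $\int|W^-|^2\ge 0$ consistently but in the wrong direction. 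So we need a lower bound on $\Vol(M)$, or equivalently an upper bound on $\int|W^-|^2$, obtained from positive curvature.

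The main step is a Bochner/Weitzenb\"ock argument for $W^-$. On an Einstein four-manifold $\delta W^-=0$, and one has
\begin{equation*}
\tfrac12\Delta|W^-|^2=|\nabla W^-|^2+\tfrac{R}{2}|W^-|^2-18\det W^- .
\end{equation*}
Using the refined Kato inequality $|\nabla W^-|^2\ge\tfrac53|\nabla|W^-||^2$, we would integrate a suitable power $|W^-|^{\alpha}$, as in Gursky--LeBrun. Where the curvature hypothesis enters is in controlling $\det W^-$: positive sectional curvature constrains the eigenvalues $\lambda_1\le\lambda_2\le\lambda_3$ of $W^-$ together with those of $W^+$. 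Indeed, in the Singer--Thorpe normal form the sectional curvatures are $1+\tfrac12(\lambda_i^+ \pm \lambda_j^-)$-type averages, and $K>0$ gives $\lambda^+_i+\lambda^-_j>-2$ for all appropriate pairs. We would use this to show that $18\det W^-\le c\,|W^-|^2$ with $c<R/2=6$ wherever the negative eigenvalue is bounded below. Integrating then forces $\int(\ldots)|W^-|^2\le 0$, hence $W^-\equiv0$.

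This last algebraic estimate is the main obstacle. The pinching $\lambda_1^-\ge -2-\lambda_3^+$ alone is not enough, so it has to be coupled with the topological integral information $\int|W^+|^2=12\pi^2\tau+\int|W^-|^2$. For $\tau\le1$ this information is small, and that smallness should keep $W^+$ small on average. Our first attempt would therefore be a weighted integral estimate that plays the Bochner inequality for $W^+$ (together with the $\chi\le3$ bound on $\int|W|^2$) against the one for $W^-$ simultaneously.
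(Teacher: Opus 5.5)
\textbf{There is a genuine gap: nothing in your plan actually forces $W^-\equiv0$, and the pointwise route you propose fails under $K>0$ alone.}

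Positive sectional curvature only gives $\lambda_1^++\lambda_1^->-2$. This is because $(\eta^++\eta^-)/\sqrt2$ is a plane for any unit $\eta^\pm\in\Lambda^\pm$, with $K=1+\frac12(\langle W^+\eta^+,\eta^+\rangle+\langle W^-\eta^-,\eta^-\rangle)$.

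Now take, at a point, $W^+=0$ and $W^-=\mathrm{diag}(-a,-a,2a)$ with $1<a<2$. This is an algebraic Einstein curvature tensor with $\Ric=3g$ and $\min K=1-a/2>0$. Yet
\[
18\det W^-=36a^3>36a^2=6|W^-|^2 .
\]
The borderline case $a=1$ is $\CP^2$ with reversed orientation. So no inequality $18\det W^-\le c|W^-|^2$ with $c<6$ follows from $K>0$. This is exactly why the earlier Bochner-type results needed pinching hypotheses.

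Your fallback, a coupled weighted estimate exploiting smallness of $\int|W^+|^2$, is not specified, and nothing in the proposal indicates that it closes. As written, the argument does not prove the theorem.

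\textbf{The missing idea is to use Gursky--LeBrun in integral form, not pointwise.} You cite Gursky--LeBrun, but their argument gives an integral gap. For Einstein metrics with $s>0$, it combines Obata's $Y([g])=s\sqrt V$, Derdzi\'nski's formula, refined Kato for $|W^-|^{1/3}$, the Yamabe inequality and H\"older. The conclusion is that either $W^-\equiv0$ or $\int_M|W^-|^2\dd V\ge s^2V/24=6V$, with no input from sectional curvature.

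You must also correct the topological identity. It should read
\[
\int_M|W^-|^2\dd V=2\pi^2(2\chi-3\tau)-3V,
\]
not $4\pi^2(\ldots)$. Your model volumes $8\pi^2/3$ and $\pi^2$ are inconsistent with your own formula, and $\Vol(\CP^2)=2\pi^2$ when $\Ric=3g$.

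Combining the gap with the corrected identity, a nonzero $W^-$ forces $V\le 8\pi^2/9$ when $\chi=2$, and $V\le 2\pi^2/3$ when $(\chi,\tau)=(3,1)$. So you need only a \emph{weak} lower volume bound, not the model volume. This is where $K>0$ enters:
\begin{itemize}
\item $\Ric=3g$ and $K>0$ give $K<3$, and Klingenberg then gives $\inj(M)>\pi/\sqrt3$.
\item Along radial geodesics, the eigenvalues of $S$ are pinched between $\sqrt3\cot(\sqrt3t)$ and $1/t$.
\item Feeding this pinching into the traced Einstein Riccati equation $H'=-\tr S^2-3$ gives $\det A>t^2\sin(\sqrt3t)/\sqrt3$.
\item Integrating over the embedded ball gives $V>2\pi^2(\pi^2-4)/9\approx1.30\pi^2$, which beats both thresholds.
\end{itemize}
Hitchin's classification then finishes the proof.

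The constant matters here. With your factor $4\pi^2$, the thresholds would be $16\pi^2/9$ and $4\pi^2/3$, and this volume bound would beat neither.
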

	\begin{remark}
	Since $M$ is even-dimensional, orientable and positively curved, Synge's
	theorem \cite{Synge1936} implies that $M$ is simply connected. In
	particular, $b_1(M)=b_3(M)=0$, and hence $\chi(M) = 2 + b_2(M)$.
	Hence, the condition $\chi(M)\le 3$ is
	equivalent to $b_2(M)\le 1$. By Freedman's classification theorem
	\cite{Freedman1982}, 
	this in turn is equivalent to $M$ being homeomorphic to $S^4$ or $\CP^2$.
Thus Theorem~\ref{thm:main} is equivalent to saying that
Conjecture~\ref{conj:main} holds under the additional assumption that $M$
is topologically $S^4$ or $\CP^2$.
	\end{remark}

	 By the theorem of Hsiang
	and Kleiner \cite{HsiangKleiner}, a compact, orientable, positively
	curved four-manifold with a nontrivial Killing vector field satisfies $\chi\le 3$. Hence, by Freedman's classification theorem,
homeomorphic to $S^4$ or $\mathbb{CP}^2$; see also Grove and Wilking \cite{GroveWilking2014} for a
	classification up to equivariant diffeomorphism. Combining this with
	Theorem~\ref{thm:main}, we obtain the following.
	
	\begin{corollary}\label{thm:main_2}
		Let $(M^4,g)$ be a complete, orientable Einstein manifold with strictly
		positive sectional curvature.
		If $M$ admits a nontrivial Killing vector field, then $(M,g)$ is
		isometric either to the unit round sphere $S^4$ or to $\CP^2$ with the
		Fubini--Study metric.
	\end{corollary}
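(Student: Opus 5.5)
The plan is to reduce Corollary~\ref{thm:main_2} directly to Theorem~\ref{thm:main}, with the topological input coming from Hsiang--Kleiner.

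First, I establish compactness. Since $\Ric_g = 3g$ with positive constant, Myers' theorem gives that the complete manifold $M$ is compact, with diameter at most $\pi$. Orientability is assumed, and the sectional curvature is strictly positive. Therefore $(M^4,g)$ is a compact, orientable, positively curved four-manifold.

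Next, I apply Hsiang--Kleiner. The isometry group of a compact Riemannian manifold is a compact Lie group. A nontrivial Killing field $X$ generates a one-parameter subgroup of isometries. Its closure in the isometry group is a torus of positive dimension, so $M$ carries an effective isometric circle action. This is exactly the hypothesis of the theorem of Hsiang and Kleiner~\cite{HsiangKleiner}, whether one states it with a Killing field or with an isometric $S^1$-action. It yields $\chi(M)\le 3$, and hence $M$ is homeomorphic to $S^4$ or $\CP^2$.

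Finally, I invoke Theorem~\ref{thm:main}. All its hypotheses now hold: $M$ is complete, orientable, Einstein, strictly positively curved, and satisfies $\chi(M)\le 3$. The theorem therefore gives that $(M,g)$ is isometric to the round $S^4$ or to the Fubini--Study $\CP^2$.

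There is no real obstacle, since the argument only chains Myers and Hsiang--Kleiner into Theorem~\ref{thm:main}. The one point needing care is the passage from a Killing field to a circle action, which uses compactness of the isometry group; it is the only step beyond citing earlier results.
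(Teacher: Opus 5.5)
Your proposal is correct and follows the paper's route exactly: cite Hsiang--Kleiner to get $\chi(M)\le 3$, then apply Theorem~\ref{thm:main}. The paper does this in one line, while you also spell out compactness via Myers and the passage from a Killing field to an effective isometric circle action via compactness of the isometry group. Both of those added steps are sound.
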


\textbf{Next, we outline the strategy of the proof.}
	We normalize the metric so that $\Ric=3g$, and write $V=\Vol(M,g)$. 
	The Gursky--LeBrun gap theorem
	\cite{GurskyLeBrun} implies that each of $W^{\pm}$ either vanishes or
	satisfies $\int_M|W^{\pm}|^2\dd V\ge 6V$. When $\chi(M)\le 3$, the
	Gauss--Bonnet--Chern and signature formulas bound the relevant Weyl
	component from above, e.g.\ $\int_M|W^-|^2\dd V=6\pi^2-3V$ if
	$(\chi,\tau)=(3,1)$. Hence a nonvanishing component would force
	$V\le 8\pi^2/9$ (if $\chi=2$) or $V\le 2\pi^2/3$ (if $\chi=3$).
	The key  ingredient is an improved volume estimate that rules this out.
	Since $0<K<3$, Klingenberg's estimate gives $\inj(M)>\pi/\sqrt3$. However,
	standard comparison only yields $V>8\pi^2/27$. Instead, we analyze the
	Riccati equation along radial geodesics. The two-sided curvature bounds
	pinch the eigenvalues of the shape operator between
	$\sqrt3\cot(\sqrt3\,t)$ and $1/t$. Combined with the Einstein condition,
	this pinching yields a sharper lower bound for the mean curvature, which
	gives
	\[
	V>\frac{2\pi^2(\pi^2-4)}{9}
	\]
	(Proposition~\ref{prop:candle}). This exceeds both thresholds, so $g$ is
	half-conformally flat. The classification of Hitchin and Friedrich--Kurke
	then shows that $(M,g)$ is the round $S^4$ or $\CP^2$ with the
	Fubini--Study metric.

\textbf{	The organization of the paper as follows.}
The paper is organized as follows. In Section~1 we collect some
preliminaries on Einstein four-manifolds and recall the Gursky--LeBrun gap
theorem. In Section~2 we prove the improved volume estimate. In Section~3
we prove Theorem~\ref{thm:main} and Corollary~\ref{thm:main_2}.

\section{Preliminaries}

	\subsection{Some properties of an 
		Einstein 4-manifold}	
	In this subsection, we recall some properties of an 
	Einstein 4-manifold.
	
	Let \((M^4,g)\) be oriented Einstein 4-manifold.  Denote $\cR$, K, $\Ric$, $s$, $W$ be the 
	Riemann curvature, sectional curvature, Ricci curvature, scalar curvature and Weyl curvature, respectively. 
	The Hodge star operator $*$ induces a natural 
	decomposition of the vector bundle of 2-forms,
	Since \(*^2=1\) on two-forms,
	\[
	\Lambda^2=\Lambda^+\oplus\Lambda^-,
	\qquad
	\Lambda^\pm=\{\omega:*\omega=\pm\omega\},
	\]
	each summand having real rank three.  The curvature operator of \((M^4,g)\) is
	\begin{equation}
		\cR=
		\begin{pmatrix}
			W^+ + \frac{s}{12}\Id & 0\\
			0 & W^-+\frac{s}{12}\Id
		\end{pmatrix},
	\end{equation}
	where $W^{\pm}$ is the restriction of $W$ to $\Lambda^{\pm}$.
	
	Next, we recall some properties of a closed Einstein 4-manifold. First, the formulas for curvature and topology of such a closed 4-manifold obtained via the Gauss-Bonnet-Chern 
	formula of the Euler characteristic $\chi(M)$ and Hirzebruch formula of the topology signature $\tau(M)$
	(see \cite{Besse} ):
	\begin{align}
		8\pi^2\chi(M)
		&=
		\int_M
		\left(
		|W^+|^2+|W^-|^2-\frac12|E|^2+\frac{s^2}{24}
		\right)\dd V,
		\label{eq:CGB-general}\\
		12\pi^2\tau(M)
		&=
		\int_M\left(|W^+|^2-|W^-|^2\right)\dd V.
		\label{eq:signature-general}
	\end{align}
	For an Einstein metric \(E\equiv0\) and \eqref{eq:CGB-general} becomes
	\begin{equation}\label{eq:CGB-einstein}
		8\pi^2\chi(M)
		=
		\int_M\left(|W^+|^2+|W^-|^2+\frac{s^2}{24}\right)\dd V.
	\end{equation}
	For an Einstein four-manifold,    a formula due to
	Derdzi\'nski \cite{Derdzinski} indicates \(W^\pm\) satisfies 
	\begin{equation}\label{eq:Derdzinski}
		\frac12\Delta|W^\pm|^2=|\nabla W^\pm|^2+\frac{s}{2}|W^\pm|^2-18\det W^\pm.
	\end{equation}

	\subsection{The Gursky--LeBrun's  gap theorem}
	\label{sec:gap}

	Next, we need the following gap theorem obtained by Gursky and LeBrun \cite{GurskyLeBrun}.  For the sake of completeness, we briefly provide their proof here.
	
	\begin{theorem}[Gursky--LeBrun's gap theorem]\label{thm:GL-gap}
		Let \((M^4,g)\) be a compact oriented Einstein four-manifold with  scalar curvature \(s>0\).
		Set  \(U=W^+\) or \(U=W^-\). Then 
		either \(U\equiv0\), or
		\begin{equation}\label{eq:gap}
			\int_M|U|^2\,\dd V\ge\frac{Y([g])^2}{24}
			=\int_M\frac{s^2}{24}\,\dd V,
		\end{equation}
		where 	\begin{equation}\label{eq:Yamabe-def}
			Y([g])
			=
			\inf_{0\ne f\in C^\infty(M)}
			\frac{\int_M\left(6|df|^2+sf^2\right)\dd V}
			{\left(\int_M|f|^4\dd V\right)^{1/2}}
		\end{equation}
		is the Yamabe constant.
	\end{theorem}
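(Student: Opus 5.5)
The plan is to run the Gursky--LeBrun argument: derive a Kato-improved differential inequality for $|U|$ from Derdzi\'nski's formula \eqref{eq:Derdzinski}, recognize it as a conformally weighted Yamabe-type inequality, and then compare with $Y([g])$. On an Einstein manifold $g$ has constant scalar curvature, and by Obata's theorem it is a Yamabe minimizer. Hence $Y([g])=s\,\Vol^{1/2}$, so $Y^2/24=\int_M s^2/24\,\dd V$, which gives the equality in \eqref{eq:gap}.

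\textbf{Algebraic inputs.} $U$ is a trace-free symmetric endomorphism of the rank-three bundle $\Lambda^\pm$. For such an endomorphism the sharp eigenvalue inequality
\[
\det U\le \frac{1}{3\sqrt6}|U|^3
\]
holds. Moreover, on an Einstein manifold $U$ is harmonic as a $\Lambda^\pm$-valued 2-form, since $\delta W^\pm=0$ by the second Bianchi identity. This gives the improved Kato inequality
\[
|\nabla U|^2\ge \tfrac53\,|\nabla|U||^2,
\]
valid away from the zero set of $U$. Substituting both into \eqref{eq:Derdzinski} and writing $u=|U|$ yields, where $u>0$,
\[
u\Delta u \ge \tfrac23|\nabla u|^2+\tfrac s2 u^2-\sqrt6\,u^3 .
\]

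\textbf{Yamabe comparison.} Set $f=u^{1/3}$. The power $1/3$ is chosen so that the gradient term $\tfrac23|\nabla u|^2$ is exactly absorbed. A direct computation then turns the inequality above into
\[
6\Delta f\ge s f-2\sqrt6\, f^{3}\cdot f^{0}u^{2/3},
\]
that is, $\big(6\Delta - s\big)f + 2\sqrt6\,u\,f\ge0$ in the weak sense. This is the statement that the conformal metric $\hat g=f^2 g$ has scalar curvature $\hat s\le 2\sqrt6\,|U|_g\,f^{-2}$; I will double check the constant, since the intended normalization is $\hat s\le 2\sqrt6|W^\pm|_{\hat g}$. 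The weak formulation across the zero set of $U$ requires care. Either regularize with $u_\epsilon=(u^2+\epsilon^2)^{1/2}$, or use unique continuation: if $U\not\equiv0$, then its zero set has measure zero and $f\in W^{1,2}$.

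Now test the definition \eqref{eq:Yamabe-def} with $f$. Integration by parts gives
\[
Y([g])\Big(\int f^4\Big)^{1/2}\le\int(6|df|^2+sf^2)\le 2\sqrt6\int u f^2 .
\]
Since $uf^2=u^{5/3}$ and $f^4=u^{4/3}$, Hölder applied in the conformal metric $\hat g$ gives
\[
\int u f^2\le \Big(\int u^2\Big)^{1/2}\Big(\int f^4\Big)^{1/2}
\]
(conformal invariance of $\int|W^\pm|^2\dd V$ makes the weights match). Dividing yields $Y\le 2\sqrt6\,\|U\|_{L^2}$, that is, $\int|U|^2\ge Y^2/24$.

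\textbf{Main obstacle.} I expect the hardest part to be the bookkeeping of exponents and constants in the conformal change. The choice of the power of $u$ must cancel the gradient term and also make the Hölder step land on the conformally invariant quantity $\int|U|^2$. The safest route is to work directly with $\hat g=|U|^{2/3}g$: show $\hat s\le 2\sqrt6|U|_{\hat g}$, then use $Y\le \big(\int\hat s^2\,\dd\hat V\big)^{1/2}$. This follows from Yamabe, where $\hat s$ is evaluated against the constant test function in the $\hat g$-normalized formula, combined with Cauchy--Schwarz, after handling the singular set by approximation.
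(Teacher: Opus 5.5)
Your argument is correct and is essentially the paper's proof. You use Derdzi\'nski's formula with the refined Kato inequality and $18\det U\le\sqrt6|U|^3$, substitute $f=|U|^{1/3}$ to obtain $(-6\Delta+s)f\le2\sqrt6 f^4$, regularize across $\{U=0\}$, test the Yamabe quotient with $f$, apply Cauchy--Schwarz, and get $Y([g])=s\sqrt V$ from Obata. Your first display there is garbled; the restatement $(6\Delta-s)f+2\sqrt6\,uf\ge0$ is the right one.

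One caution on your alternative $\hat g$-formulation. Use the linear chain $Y\le\Vol(M,\hat g)^{-1/2}\int\hat s\,\dd\hat V\le2\sqrt6\,\Vol(M,\hat g)^{-1/2}\int|U|_{\hat g}\,\dd\hat V$ rather than going through $\int\hat s^2\,\dd\hat V$. You only have an upper bound on $\hat s$, so you cannot control $\int\hat s^2\,\dd\hat V$.
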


	\begin{proof}[Proof of Theorem~\ref{thm:GL-gap}]
		
		First, a fundamental result of Obata \cite{Obata} implies that any Einstein metric
		is a Yamabe minimizer. Indeed, notice that	 
		the conformal Laplacian \(L_g=-6\Delta+s\) has positive first
		eigenvalue and \(Y([g])>0\).  Let \(\widehat g=u^2g\) be a Yamabe minimizer,
		with constant scalar curvature \(\widehat s>0\) \cite{Schoen}; then
		\(Y([g])^2=\widehat s^{\,2}\Vol(M,\widehat g)\).  Applying
		\eqref{eq:CGB-general} to \(g\) and to \(\widehat g\) and subtracting, using
		the conformal invariance of \(\int|U|^2\dd V\) in dimension four, yields
		\begin{equation}
			\frac{s^2V}{24}
			=
			\frac{Y([g])^2}{24}
			-
			\frac12\int_M|\Ric^\circ_{\widehat g}|^2\,\dd V_{\widehat g},
		\end{equation}
		so \(Y([g])\ge s\sqrt V\).  Testing \eqref{eq:Yamabe-def} with \(f\equiv1\)
		gives the reverse inequality.  Hence, we have
		\begin{equation}\label{lem:yamabe}
			Y([g])=s\sqrt V, 
		\end{equation}
		with \(V=\Vol(M,g)\).
		
		On \(\{|U|>0\}\) set \(u=|U|\).  From
		\(\frac12\Delta u^2=u\Delta u+|\nabla u|^2\) and \eqref{eq:Derdzinski},
		\[
		u\Delta u
		=	
		|\nabla U|^2-|\nabla u|^2+\frac{s}{2}u^2-18\det U.
		\]
		Furthermore, it was observed by Gursky-LeBrun \cite{GurskyLeBrun} and Yang \cite{Yang} that $W^{\pm}$ satisfies 
		the following refined Kato inequality (proven to be optimal by \cite{Branson2000, CalderbankGauduchonHerzlich2000}),
		\begin{equation}\label{lem:kato}	
			|\nabla U|^2\ge\frac53\,|d|U||^2.
		\end{equation}
		It follows that \(|\nabla U|^2-|\nabla u|^2\ge\frac23|\nabla u|^2\),
		and hence
		\begin{equation}\label{lem:det-bound}
			18\det U\le18\frac{u^3}{3\sqrt6}
			=\sqrt6\,u^3.
		\end{equation} 
		Hence
		\begin{equation}\label{eq:u-diff}
			u\Delta u\ge\frac23|\nabla u|^2+\frac{s}{2}u^2-\sqrt6\,u^3.
		\end{equation}
		For \(v=u^{1/3}\) one has
		\(\Delta v=\frac13u^{-2/3}\Delta u-\frac29u^{-5/3}|\nabla u|^2\); dividing
		\eqref{eq:u-diff} by \(u\) and substituting, the gradient terms cancel
		exactly.  Thus
		\begin{equation}
			(-6\Delta+s)v\le2\sqrt6\,v^4.
		\end{equation}
		
		For
		\(\eps>0\) put \(r_\eps=(|U|^2+\eps^2)^{1/2}\) and \(v_\eps=r_\eps^{1/3}\).
		Differentiating \(r_\eps\) and using \eqref{eq:Derdzinski} gives
		\[
		r_\eps\Delta r_\eps
		=	
		|\nabla U|^2-|\nabla r_\eps|^2+\frac{s}{2}|U|^2-18\det U,
		\]
		while \(|\nabla r_\eps|=\frac{|U|}{r_\eps}|d|U||\le|d|U||\), so that
		\(|\nabla U|^2-\frac53|\nabla r_\eps|^2\ge0\) by \eqref{lem:kato}.
		Repeating the power computation with \(|U|^2=r_\eps^2-\eps^2\) yields
		\begin{equation}
			(-6\Delta+s)v_\eps\le2\sqrt6\,v_\eps^4+s\eps^2r_\eps^{-5/3}.
		\end{equation}
		Multiplying by \(v_\eps\) and integrating by parts,
		\[
		\int_M\left(6|dv_\eps|^2+sv_\eps^2\right)\dd V
		\le
		2\sqrt6\int_Mv_\eps^5\,\dd V
		+
		s\eps^2\int_Mr_\eps^{-4/3}\,\dd V.
		\]
		Since \(r_\eps\ge\eps\), the error is at most \(s\,\Vol(M)\eps^{2/3}\).  Hence
		\(v_\eps\) is bounded in \(H^1\) and, after a subsequence, converges weakly in
		\(H^1\) and strongly in \(L^q\), \(q<4\), to \(v=|U|^{1/3}\); dominated
		convergence applies to \(v_\eps^5\), and lower semicontinuity of the energy
		gives
		\begin{equation}
			\int_M\left(6|dv|^2+sv^2\right)\dd V\le2\sqrt6\int_Mv^5\,\dd V.
		\end{equation}
		The Yamabe inequality, together with \(v^3=|U|\), so that
		\(v^5=v^2|U|\), then gives
		\[
		Y([g])\|v\|_4^2
		\le
		2\sqrt6\int_Mv^2|U|\,\dd V
		\le
		2\sqrt6\,\|v\|_4^2\,\|U\|_2.
		\]
		If \(U\not\equiv0\) then \(\|v\|_4>0\), so \(Y([g])\le2\sqrt6\|U\|_2\), which
		is \eqref{eq:gap}.
	\end{proof}

	\section{The volume estimates}
	\label{sec:volume}

	Let \((M^4,g)\) be a compact, orientable, with \(\Ric=3g\) and \(K>0\). 
	Then \(\max K<3\).
	By Synge's theorem \(M\) is simply connected, so the
	even-dimensional Klingenberg estimate \cite{Klingenberg1959} (cf.\ \cite{Petersen}) gives
	\begin{equation}\label{lemma_pre}
		\inj(M)=\operatorname{conj}(M)\ge\pi/\sqrt\kappa>\pi/\sqrt3.
	\end{equation}
	Fix \(p\in M\) and a unit-speed radial geodesic
	\(\gamma:[0,\pi/\sqrt3]\to M\) with \(\gamma(0)=p\).  In a parallel
	orthonormal trivialization of \(\dot\gamma^\perp\), let \(A\) be the Jacobi
	tensor
	\begin{equation}\label{def_A}
		A''+\cR_\gamma A=0,\qquad A(0)=0,\qquad A'(0)=\Id,
		\qquad \cR_\gamma(v)=R(v,\dot\gamma)\dot\gamma .
	\end{equation}
	For \(0<t<\pi/\sqrt3\), \(A(t)\) is nonsingular and \(S=A'A^{-1}\) is
	symmetric, satisfying the Riccati equation
	\begin{equation}\label{eq:Riccati}
		S'+S^2+\cR_\gamma=0,
	\end{equation}
	while the radial volume density is \(J(t,\theta)=\det A(t,\theta)\) with
	\begin{equation}\label{eq:logdet-H}
		\frac{\dd}{\dd t}\log\det A=\tr S.
	\end{equation}
	By the Hessian comparison and \eqref{lemma_pre}, we have 
	\begin{equation}\label{lem:S-bounds}
		c(t)\Id<S(t)<b(t)\Id,
		\qquad 0<t<\frac{\pi}{\sqrt3},
	\end{equation}
	with \(b(t)=1/t\) and \(c(t)=\sqrt3\cot(\sqrt3\,t)\). It follows that
	\[
	\det A(t)\ge
	\left(\frac{\sin(\sqrt3t)}{\sqrt3}\right)^3, 	\qquad 0<t<\frac{\pi}{\sqrt3}.
	\]
	By \eqref{lemma_pre}, we can conclude that
	\[
	\Vol(M,g)>
	2\pi^2\int_0^{\pi/\sqrt3}
	\left(\frac{\sin(\sqrt3t)}{\sqrt3}\right)^3\,dt
	=
	\frac{8\pi^2}{27}.
	\]
However, this lower bound for the volume is not sufficient for the proof of
Theorem~\ref{thm:main}. Notice that, when the metric is normalized so that
$\Ric=3g$,
\[
\Vol(S^4)=\frac{8\pi^2}{3}
\qquad\text{and}\qquad
\Vol(\CP^2)=2\pi^2,
\]
which suggests that a sharper estimate should be attainable.
	
	 Next, we obtain an improved estimate.

	\begin{proposition}\label{prop:candle}
		Suppose that \((M^4,g)\) is compact, orientable, with \(\Ric=3g\) and \(K>0\). Then we have
		\begin{equation}\label{eq:volume-floor}
			\Vol(M,g)>L:=\frac{2\pi^2(\pi^2-4)}9.
		\end{equation}
	\end{proposition}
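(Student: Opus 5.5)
The idea is to feed the Einstein condition into the traced Riccati equation \eqref{eq:Riccati}, and to use the two-sided pinching \eqref{lem:S-bounds} to control $\tr S^2$ linearly in the mean curvature $h:=\tr S$. This gives a linear differential inequality for $h$ that can be integrated explicitly.

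\textbf{Step 1: the traced Riccati equation.} Tracing \eqref{eq:Riccati} gives $h'+\tr S^2+\tr\cR_\gamma=0$. Since $\Ric=3g$, we have $\tr\cR_\gamma=\Ric(\dot\gamma,\dot\gamma)=3$, so
\[
h'+\tr S^2+3=0 .
\]
By \eqref{lem:S-bounds}, every eigenvalue $\lambda_i$ of $S(t)$ lies in $(c,b)$. Hence $(\lambda_i-b)(\lambda_i-c)\le 0$, that is, $\lambda_i^2\le (b+c)\lambda_i-bc$. Summing over the three eigenvalues gives $\tr S^2\le (b+c)h-3bc$. This is the key use of pinching: it replaces the Cauchy--Schwarz lower bound $\tr S^2\ge h^2/3$, which points the wrong way here, with an upper bound linear in $h$. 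We obtain
\[
h'+(b+c)h\ge 3bc-3 .
\]

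\textbf{Step 2: integrate the linear inequality.} Since $\int b=\log t$ and $\int c=\log\sin(\sqrt3 t)$, an integrating factor is $\phi(t)=t\sin(\sqrt3 t)$. Then
\[
(\phi h)'\ge \phi\,(3bc-3)=3\sqrt3\cos(\sqrt3 t)-3t\sin(\sqrt3 t).
\]
Near $t=0$ we have $h\sim 3/t$, so $\phi h\to0$. Integrating from $0$ and using $\int_0^t s\sin(\sqrt3 s)\,ds=-\tfrac{t}{\sqrt3}\cos(\sqrt3 t)+\tfrac13\sin(\sqrt3 t)$ gives
\[
t\sin(\sqrt3t)\,h\ge 2\sin(\sqrt3 t)+\sqrt3\,t\cos(\sqrt3 t),
\qquad\text{i.e.}\qquad
h\ge \frac2t+\sqrt3\cot(\sqrt3 t).
\]
This sits strictly between the Euclidean value $3/t$ and the spherical value $3\sqrt3\cot(\sqrt3 t)$.

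\textbf{Step 3: from mean curvature to volume.} By \eqref{eq:logdet-H}, and since $\det A(t)\sim t^3$ as $t\to0$, comparing with $\frac{d}{dt}\log\bigl(t^2\sin(\sqrt3 t)/\sqrt3\bigr)$ gives
\[
\det A(t)\ge \frac{t^2\sin(\sqrt3 t)}{\sqrt3}\qquad\text{on }(0,\pi/\sqrt3).
\]
By \eqref{lemma_pre}, the exponential map is a diffeomorphism on the ball of radius $\pi/\sqrt3$, and this ball is a proper subset of $M$. Integrating over the unit sphere, which has area $2\pi^2$, and substituting $u=\sqrt3 t$, we get
\[
\Vol(M,g)>2\pi^2\int_0^{\pi/\sqrt3}\frac{t^2\sin(\sqrt3t)}{\sqrt3}\,dt=\frac{2\pi^2}{9}\int_0^\pi u^2\sin u\,du=\frac{2\pi^2(\pi^2-4)}{9}=L .
\]

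\textbf{Main obstacle.} The delicate points are the boundary behaviour at $t=0$, namely justifying $\phi h\to0$ and fixing the constant in the $\log\det$ comparison, and the strictness of the final inequality. The latter follows from $\inj(M)>\pi/\sqrt3$: the complement of the ball $B_p(\pi/\sqrt3)$ has nonempty interior and hence positive volume. It also follows from the strict inequalities in \eqref{lem:S-bounds}. The algebraic trick in Step 1 is the real content of the argument; everything after it is explicit integration.
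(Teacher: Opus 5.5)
Your proposal is correct and follows the paper's proof essentially step for step. It uses the same pinching trick $\tr S^2<(b+c)\tr S-3bc$, the same integrating factor $t\sin(\sqrt3 t)$, the same bound $\tr S\ge \frac2t+\sqrt3\cot(\sqrt3 t)$, and the same comparison of $\det A$ with $t^2\sin(\sqrt3 t)/\sqrt3$. The differences are cosmetic: you integrate the linear inequality explicitly, whereas the paper checks that $H_0=c+2b$ solves the comparison ODE and integrates $H-H_0$; and you get strictness from $\inj(M)>\pi/\sqrt3$ rather than from the strict pinching, which is equally valid.
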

	
	\begin{proof}
		Let the eigenvalues of \(S=A'A^{-1}\) be \(x_1,x_2,x_3\), where $A$ is defined in \eqref{def_A}.  By \eqref{lem:S-bounds},
		\((x_i-b)(x_i-c)<0\), i.e.
		\begin{equation}\label{eq:chord}
			x_i^2<(b+c)x_i-bc,
		\end{equation}
		and summing over \(i\),
		\begin{equation}\label{eq:trS2}
			\tr S^2<(b+c)\tr S-3bc,
		\end{equation}
		where \(b:=b(t)=1/t\) and \(c:=c(t)=\sqrt3\cot(\sqrt3\,t)\).
		Set
		\[
		H:=\tr S.
		\]
		Taking the trace of \eqref{eq:Riccati} and using the Einstein condition $
		\tr\cR_\gamma
		=
		\Ric(\dot\gamma,\dot\gamma)
		=
		3,
		$
		we obtain
		\[
		H'=-\tr S^2-3.
		\]
		Using \eqref{eq:trS2},
		\begin{equation}\label{eq:H-ineq}
			H'>-(b+c)H+3bc-3.
		\end{equation}
		Define
		\begin{equation}\label{eq:H0}
			H_0:=c+2b.
		\end{equation}
		A direct differentiation,
		\[
		b'=-b^2,
		\qquad
		c'=-(c^2+3),
		\]
		which implies that
		\begin{align*}
			H_0'
			&=
			-c^2-3-2b^2\\
			&=
			-(b+c)(c+2b)+3bc-3\\
			&=
			-(b+c)H_0+3bc-3.
		\end{align*}
		Subtracting this equality from \eqref{eq:H-ineq},
		\begin{equation}\label{eq:Hdiff}
			(H-H_0)'+(b+c)(H-H_0)>0.
		\end{equation}
		Since
		\[
		\left(\log[t\sin(\sqrt3\,t)]\right)'
		=
		\frac1t+\sqrt3\cot(\sqrt3 t)
		=
		b+c,
		\]
		\eqref{eq:Hdiff} is equivalent to
		\begin{equation}\label{eq:integrating-factor}
			\left[
			t\sin(\sqrt3t)\,(H-H_0)
			\right]'>0.
		\end{equation}
		Near \(t=0\), the Jacobi tensor has the expansion
		\[
		A(t)
		=
		t\Id-\frac{t^3}{6}\cR_\gamma(0)+O(t^4),
		\]
		and hence
		\[
		S(t)
		=
		\frac1t\Id-\frac{t}{3}\cR_\gamma(0)+O(t^2).
		\]
		Taking traces and using \(\tr\cR_\gamma(0)=3\),
		\[
		H(t)=\frac3t-t+O(t^2).
		\]
		On the other hand,
		\[
		c(t)
		=
		\frac1t-t+O(t^3),
		\]
		so
		\[
		H_0(t)=c+2b=\frac3t-t+O(t^3).
		\]
		Therefore
		\[
		t\sin(\sqrt3t)(H-H_0)\longrightarrow0
		\qquad(t\downarrow0).
		\]
		By \eqref{eq:integrating-factor},
		\begin{equation}\label{eq:H-lower}
			H>H_0
			=
			\sqrt3\cot(\sqrt3t)+\frac2t.
		\end{equation}
		
		Let \(J_0(t)=\frac{t^2\sin(\sqrt3t)}{\sqrt3}\).  Then
		\(\frac{\dd}{\dd t}\log J_0=H_0<\frac{\dd}{\dd t}\log\det A\) by
		\eqref{eq:H-lower} and \eqref{eq:logdet-H}, so \(\det A/J_0\) is strictly
		increasing; both functions equal \(t^3+O(t^5)\) near \(0\), so the ratio
		tends to \(1\) and hence 
		\begin{equation}
			\det A(t)>\frac{t^2\sin(\sqrt3t)}{\sqrt3},
			\qquad 0<t<\frac{\pi}{\sqrt3}.
		\end{equation}
		It follows from \eqref{lemma_pre} that
		\[
		\Vol(M)>
		\Vol(S^3)\int_0^{\pi/\sqrt3}\frac{t^2\sin(\sqrt3t)}{\sqrt3}\,\dd t
		=
		\frac{2\pi^2}{9}\int_0^\pi x^2\sin x\,\dd x
		=
		\frac{2\pi^2(\pi^2-4)}9,
		\]
		where \(x=\sqrt3t\) and \(\int_0^\pi x^2\sin x\,\dd x=\pi^2-4\).  	
	\end{proof}

	\section{Proof of Theorem \ref{thm:main} and Corollary \ref{thm:main_2}}

	\label{sec:rigidity}
	Now we give the proof of Theorem \ref{thm:main}.
	\begin{proof}[Proof of Theorem \ref{thm:main}]
		Notice that
		Synge's theorem  yields simple connectivity, whence
		\(H^1(M;\R)=H^3(M;\R)=0\), and Poincar\'{e} duality gives	
		\begin{equation}\label{eq:chi-b2}
			\chi(M)=2+b_2(M)>0.
		\end{equation}	
		Let \(V=\Vol(M,g)\).  \textbf{Under the normalization, we may assume \(\Ric=3g\)  and \(s=12\)}.
		Since $b_2(M) \le 1$ by the assumption and the signature $\tau=\chi\ (mod) \ 2$, we only need to consider the following two cases:
		
		\subsection*{Caese 1: \((\chi,\tau)=(2,0)\)}
		
		By \eqref{eq:signature-general}, $$\int|W^+|^2\dd V= \int|W^-|^2\dd V;$$ we write $F:=\int|W^+|^2\dd V= \int|W^-|^2\dd V$.
		By \eqref{eq:CGB-einstein},
		we have
		\begin{equation}
			F=8\pi^2-3V.
		\end{equation}
		If \(F>0\), Theorem~\ref{thm:GL-gap}  give
		\(F\ge6V\), hence \(V\le\frac{8\pi^2}{9}\), contradicting
		Proposition~\ref{prop:candle}.  Thus \(F=0\) and \(W^+=W^-=0\).  The metric is
		then conformally flat and Einstein, so \(\cR=\Id\) on \(\Lambda^2\) and \((M,g)\) is the unit round
		\(S^4\).
		
		\subsection*{Case 2: \((\chi,\tau)=(3,1)\) or  \((\chi,\tau)=(3,-1)\)} 
		
		We first consider the case \((\chi,\tau)=(3,1)\).
		Put \(A=\int|W^+|^2\dd V\) and \(B=\int|W^-|^2\dd V\).  By
		\eqref{eq:signature-general} and \eqref{eq:CGB-einstein},
		\begin{equation}\label{eq:A-B}
			A-B=12\pi^2,
			\qquad
			A+B+6V=24\pi^2,
		\end{equation}
		so
		\begin{equation}\label{eq:B-formula}
			B=6\pi^2-3V.
		\end{equation}
		If \(B>0\), the gap estimate gives \(B\ge6V\), hence \(V\le\frac{2\pi^2}{3}\),
		again contradicting Proposition~\ref{prop:candle}.  Therefore \(B=0\),
		\(W^-=0\).
		Since \(W^-\equiv0\), the metric \(g\) is self-dual.
		Hitchin's classification theorem
		\cite[Theorem~13.30]{Besse}
		therefore implies that \((M,g)\) is, up to homothety,
		isometric to the round four-sphere or to
		\(\mathbb{CP}^2\) with its Fubini--Study metric.
		The round case is excluded by \(\tau(M)=1\).
		Hence
		\[
		(M,g)\cong(\mathbb{CP}^2,g_{\mathrm{FS}}).
		\]
		
		If  \((\chi,\tau)=(3,-1)\), reverse the
		orientation; this interchanges \(W^+\) and \(W^-\) and the preceding argument
		applies verbatim.	
		This proves Theorem~\ref{thm:main}.
	\end{proof}

		Next, we give the proof of  Corollary \ref{thm:main_2}.
	
		\begin{proof}[Proof of  Corollary \ref{thm:main_2}]
			 By the theorem of Hsiang
			and Kleiner \cite{HsiangKleiner}, a compact, orientable, positively
			curved four-manifold with a nontrivial Killing vector field satisfies $\chi\le 3$. Then Corollary \ref{thm:main_2} follows from
			Theorem~\ref{thm:main} directly.
			\end{proof}

\bigskip
\noindent\textbf{Acknowledgments.} The author would like to thank Professor Xiaochun Rong for many inspiring discussions on this work.

\end{document}